\newtheorem{thm}{Theorem}
\newtheorem{cor}{Corollary}
\newtheorem{lemma}{Lemma}
\newtheorem{definition}{Definition}
\title{Complexity of the Fourier transform \\ on the Johnson graph}
\author{Rodrigo Iglesias\\
\small Departamento de Matemática\\ [-0.8ex]
\small Universidad Nacional del Sur\\ [-0.8ex]
\small Bahía Blanca, Argentina.
 \and
Mauro Natale\\
\small Facultad de Ciencias Exactas\\ [-0.8ex]
\small Universidad Nacional del Centro\\ [-0.8ex]
\small Tandil, Argentina.} 
\begin{document}
\maketitle

\begin{abstract}
The set $X$ of $k$-subsets of an $n$-set has a natural graph structure where two $k$-subsets are connected if and only if the size of their intersection is $k-1$. This is known as the Johnson graph.
The symmetric group $S_n$ acts on the space of complex functions on $X$ and this space has a multiplicity-free decomposition as sum of irreducible representations of $S_n$, so it has a well-defined Gelfand-Tsetlin basis up to scalars. The Fourier transform on the Johnson graph is defined as the change of basis matrix from the delta function basis to the Gelfand-Tsetlin basis.

The direct application of this matrix to a generic vector requires $\binom{n}{k}^2$ arithmetic operations. 
We show that --in analogy with the classical Fast Fourier Transform on the discrete circle-- this matrix can be factorized as a product of $n-1$ orthogonal matrices, each one with at most two nonzero elements in each column. 
This factorization shows that the number of arithmetic operations required to apply this matrix to a generic vector is bounded above by $2(n-1) \binom{n}{k}$. 
As a consequence, we show that the problem of computing all the weights of the irreducible components of a given function can be solved in $O(n \binom{n}{k})$ operations, improving the previous bound $O(k^2 \binom{n}{k})$ when $k$ asymptotically dominates $\sqrt{n}$ in a non-uniform model of computation. The same improvement is achieved for the problem of computing the isotypic projection onto a single component.

The proof  is based on the construction of $n-1$ intermediate bases, each one parametrized by certain pairs composed by a  standard Young tableau and a word. The parametrization of each basis is obtained via the Robinson-Schensted insertion algorithm.


\end{abstract}

\section{Introduction}
The set of all subsets of cardinality $k$ of a set of cardinality $n$ is a basic combinatorial object with a natural metric space structure where two $k$-subsets are at distance $d$ if the size of their intersection is $k-d$. This structure is captured by the Johnson graph $J(n,k)$, whose nodes are the $k$-subsets and two $k$-subsets are connected if and only if they are at distance $1$. 

The Johnson graph is closely related to the Johnson scheme, an association scheme of major significance in classical coding theory (see \cite{Delsarte} for a survey on association scheme theory and its application to coding theory). 
Recently, the Johnson graph played a fundamental role in the breakthrough quasipolynomial time algorithm for the  graph isomorphism problem presented in \cite{Babai} (see \cite{Toran} for background on the graph isomorphism problem).

  Functions on the Johnson graph arise in the analysis of ranked data. In many contexts, agents choose a $k$-subset from an $n$-set, and the data is collected as the function that assigns to the $k$-subset  $x$ the number of agents who choose $x$. This situation is considered, for example, in the statistical analysis of certain lotteries (see \cite{Diaconis2}, \cite{Diaconis-Rockmore}).
  
The vector space of functions on the Johnson graph is a representation of the symmetric group and it  decomposes as a multiplicity-free direct sum of irreducible representations (see \cite{Stanton}). 
Statistically relevant information about the function is contained in the isotypic projections of the function onto each irreducible component. This approach to the analysis of ranked data was called spectral analysis by Diaconis and developed in \cite{Diaconis}, \cite{Diaconis2}.
The problem of the efficient computation of the isotypic projections  has been studied by Diaconis and Rockmore in \cite{Diaconis-Rockmore}, and by Maslen, Orrison and Rockmore in \cite{Lanczos}. 
  
  The classical Discrete Fourier Transform (DFT) on the cyclic group  $\mathbb{Z}/2^n \mathbb{Z}$ can be seen as the application of a change of basis matrix from the  basis  $B_0$ of delta functions  to the basis $B_n$ of characters of the group $\mathbb{Z}/2^n \mathbb{Z}$. The direct application of this matrix to a generic vector involves $(2^n)^2$ arithmetic operations. The Fast Fourier Transform (FFT) is a fundamental algorithm that computes the DFT in $O(n 2^n )$ operations. This  algorithm was discovered by Cooley and Tukey \cite{Cooley-Tukey} and  the efficiency of their algorithm is due to a factorization of the change of basis matrix
  $$
  [B_0]_{B_n} = [B_{n-1}]_{B_n}\  ... \ [B_1]_{B_2} \ [B_0]_{B_1}
  $$
  where $B_1 ,..., B_{n-1}$ are intermediate orthonormal bases such that each matrix $[B_{i-1}]_{B_i}$ has  two nonzero entries in each column.
  
  In this paper, we show that the same phenomenon occurs in the case of the non-abelian Fourier transform on the Johnson graph.  This transform is defined as the application  of the change of basis matrix from the  basis  $B_0$ of delta functions  to the basis $B_n$ of Gelfand-Tsetlin functions. The Gelfand-Tsetlin basis --defined in Section \ref{GT basis}--  is well-behaved with respect to the action of the symmetric group $S_n$, in the  sense that each irreducible component is generated by a subset of the basis. 
    
  A direct computation of this Fourier transform involves $\binom{n}{k}^2$ arithmetic operations.
  The computational model used here counts a single complex multiplication
and addition as one operation.
  We construct intermediate orthonormal bases $B_1 ,..., B_{n-1}$ such that each change of basis matrix $[B_{i-1}]_{B_i}$ has at most two nonzero entries in each column. 
  Each intermediate basis $B_i$ is parametrized by pairs composed by a standard Young tableau of height at most two and a word in the alphabet $\{ 1,2 \}$ as shown in Figure \ref{Labels of intermediate bases}.
  These intermediate bases enable the computation of the non-abelian Fourier transform --as well as its inverse-- in at most $2(n-1) \binom{n}{k}$ operations.

\begin{figure}[ht] 
\label{Labels of intermediate bases}
\begin{center}
\includegraphics[scale=0.6]{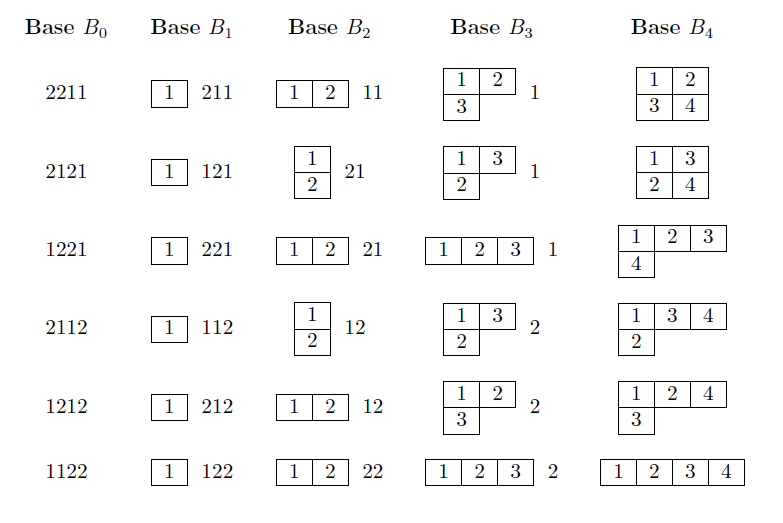} 
\caption{Labels of the intermediate bases in the case $n=4, k=2$. The $i$-th column parametrize the basis $B_i$. }
\label{Labels of intermediate bases}
\end{center}
\end{figure}

 The upper bound we obtained for the algebraic complexity of the Fourier transform on the Johnson graph can be applied to the well-studied problem of computing the isotypic components of a function.
The most efficient algorithm for computing all the isotypic components --given by Maslen, Orrison and Rockmore in \cite{Lanczos}--  relies on Lanczos iteration method and uses $O(k^2 \binom{n}{k})$ operations. 

But if the problem were to compute the isotypic projection onto a single component, it is no clear how to reduce this upper bound using the algorithm in \cite{Lanczos}. We show that -once the intermediate matrices $[B_{i-1}]_{B_i}$ have been computed for a fixed pair $(n,k)$-- this task can be accomplished in $O(n \binom{n}{k})$ operations, so our upper bound is an improvement when $k$ asymptotically dominates $\sqrt{n}$. 

We point out that this is an improvement only within the non-uniform setting, in the sense that our results imply the existence of smaller arithmetic circuits for each pair $(n,k)$, but we do not present an algorithm to find  each circuit because we do not give an algorithm to find the intermediate matrices $[B_{i-1}]_{B_i}$. We refer to \cite{Papadimitriou} (Section 11.4), \cite{Arora} (Chapter 14) and \cite{Burgisser} for an introduction to non-uniform models of computation and arithmetic circuits.

We also show that the same $O(n \binom{n}{k})$ bound is achieved for the problem of computing all the weights of the irreducible components appearing in the decomposition of a function.  This problem could also be solved by computing every isotypic component and measuring their lengths, but this approach requires $O(k^2 \binom{n}{k})$ operations if we use the algorithm in \cite{Lanczos}.

  In Section \ref{GT basis}, we review the definition  of Gelfand-Tsetlin bases for representations of the symmetric group. In Section  \ref{Johnson graph}, we describe the well-known decomposition of the function space on the Johnson graph and define the corresponding Gelfand-Tsetlin basis. In Section \ref{Intermediate}, we introduce the sequence of intermediate bases of the function space, we analyze the sparsity of the change of basis matrix between two consecutive bases and then we prove the main result, contained in Theorem \ref{Main theorem}.  In Section \ref{RS} we point out the relation of our algorithm with the Robinson-Schensted insertion algorithm. In Section \ref{Applications}  we apply our algorithm to the problem of the computation of the isotypic components of a function on the Johnson graph. Finally, in Section \ref{Further directions} we propose a problem for future work.

\section{Gelfand-Tsetlin bases}
\label{GT basis}

Consider the chain of subgroups of $S_n$ 
$$
S_1 \subset S_2 \subset S_3 ...\subset S_n
$$
where $S_k$ is the subgroup of those permutations fixing the last $n-k$ elements of $\{1,...,n\}$.
Let $Irr(n)$ be the set of equivalence classes of irreducible complex representations of $S_n$. 
A fundamental fact in the representation theory of $S_n$ is that if $V_{\lambda}$ is an irreducible $S_n$-module corresponding to the representation $\lambda \in Irr(n)$ and we consider it by restriction as an $S_{n-1}$-module, then it decomposes as sum of irreducible representations of $S_{n-1}$ in a multiplicity-free way (see for example \cite{Vershik}). This means that if $V_{\mu}$ is an irreducible $S_{n-1}$-module corresponding to the representation $\mu \in Irr(n-1)$ then the dimension of the space 
$
\mbox{Hom}_{S_{n-1}} (V_{\mu},V_{\lambda})
$
is $0$ or $1$.
The \textit{branching graph}  is the following directed graph. The set of nodes is the disjoint union 
$$
\bigsqcup_{n \ge 1}Irr(n).
$$
Given representations $\lambda \in Irr(n)$ and $\mu \in Irr(n-1)$ there is an edge connecting them if and only if  
$\mu$ appears in the decomposition of $\lambda$, that is, if $\mbox{dim Hom}_{S_{n-1}} (V_{\mu},V_{\lambda})=1$.
If there is an edge between them we write
$$
\mu \nearrow \lambda,
$$
so we have a canonical decomposition of $V_{\lambda}$ into irreducible $S_{n-1}$-modules
$$
V_{\lambda} = \bigoplus_{\mu \nearrow \lambda} V_{\mu} .
$$
Applying this formula iteratively we obtain a uniquely determined decomposition into one-dimensional subspaces 
$$
V_{\lambda} = \bigoplus_{T} V_{T} ,
$$
where $T$ runs over all chains
$$
T = \lambda_1 \nearrow \lambda_2 \nearrow ... \nearrow \lambda_n,
$$
with $\lambda_i \in Irr(i)$ and $\lambda_n = \lambda$.
Choosing a unit vector $v_T$ --with respect to the $S_n$-invariant inner product  in $V_\lambda$--  of the one-dimensional space $V_T$ we obtain a basis $\{v_T\}$ of the irreducible module $V_{\lambda}$, which is called the \textit{Gelfand-Tsetlin basis}. 

Observe that if $V$ is a multiplicity-free representation of $S_n$ then there is a uniquely determined --up to scalars--  Gelfand-Tsetlin basis of $V$. In effect, if 
$$
V = \bigoplus_{\lambda \in S \subseteq Irr(n)} V_{\lambda}
$$
and  $B_{\lambda}$ is a GT-basis of $V_{\lambda}$ then a GT-basis of $V$ is given by the disjoint union 
$$
B = \bigsqcup_{\lambda \in S \subseteq Irr(n)} B_{\lambda}.
$$

The Young graph is the directed graph where the nodes are the Young diagrams and there is an arrow from  $\lambda$ to  $\mu$ if and only if $\lambda$ is contained in $\mu$ and their difference consists in only one box. It turns out that the branching graph is isomorphic to the Young graph and there is a bijection between the set of Young diagrams with $n$ boxes and $Irr(n)$ (Theorem 5.8 of \cite{Vershik}). 
  
Then there is a bijection between the Gelfand-Tsetlin basis of $V_{\lambda}$ --where $\lambda$ is a Young diagram-- and the set of paths in the Young graph starting at  the one-box diagram and ending at the diagram $\lambda$. Each path can be represented by a unique standard Young tableau, so that the Gelfand-Tsetlin-basis of $V_{\lambda}$ is parametrized by the set of standard Young tableaux of shape $\lambda$ (see Figure \ref{Young graph}).
From now on we identify a chain 
$\lambda_1 \nearrow \lambda_2 \nearrow ... \nearrow \lambda_n$
with its corresponding standard Young tableau.

\begin{figure}[ht] 
\begin{center}
\includegraphics[scale=0.4]{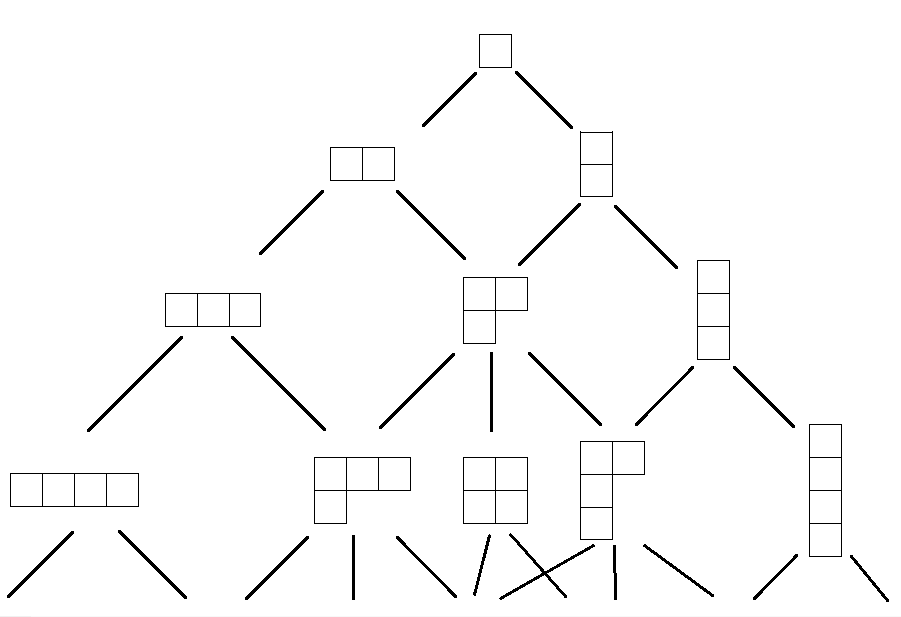} 
\caption{The Young graph. Each path from the top node to a particular Young diagram $\lambda$ is identified with a standard Young tableau of shape $\lambda$.
}
\label{Young graph}
\end{center}
\end{figure}

\section{Decomposition of the function space on the Johnson graph}
\label{Johnson graph}

 We define a  \textit{$k$-set} as a subset of $\{1,...,n\}$ of cardinality $k$. Let   $X$ be the set of all $k$-sets.  Given two $k$-sets $x,y$ the distance  $d(x,y)$  is defined as $n- \lvert x \cap y \rvert$. The group $S_n$ acts naturally on  $X$ by
$$
\sigma \{i_1,...,i_k\} = \{\sigma(i_1),...,\sigma(i_k)\}
$$
The vector space $\mathcal{F}$ of the complex valued functions on $X$ is a complex representation of  $S_n$ where the action is given by $\sigma f = f \circ \sigma^{-1}$.

 To each $k$-set $x \in X$ we attach the delta function $\delta(x)$ defined on $X$ by 
$$
\delta(x) (z) = 
\left\{ 
\begin{array}{cc}
1& \mbox{if} \ \  x=z \\
0 & \mbox{otherwise}
\end{array}
\right. .
$$

We consider $\mathcal{F}$ as an inner product space where the inner product is such that the delta functions form an orthonormal basis.

A Young diagram can be identified with the sequence given by the numbers of boxes in the rows, written top down. For example the Young diagram
\ytableausetup{smalltableaux}
\begin{center}
\ydiagram{5,4,2}
\end{center}
is identified with $(5,4,2)$.
It can be shown (see \cite{Stanton}) that the decomposition of $\mathcal{F}$ as a direct sum of irreducible representations of $S_n$ is given as follows.

From now on we denote by $s$ the number $min(k,n-k)$.

\begin{thm}
\label{decomposition of F}
The space  $\mathcal{F}$ of functions on the Johnson graph J(n,k) decomposes in  $s+1$ multiplicity-free irreducible representations of the group $S_n$. Moreover, the decomposition is given by
$$
\mathcal{F} = \bigoplus_{i=0}^{s} V_{\alpha_i}
$$ 
where $\alpha_i$ is the Young diagram $(n-i,i)$.
\end{thm}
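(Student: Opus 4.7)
The plan is to identify $\mathcal{F}$ with a Young permutation module and decompose it via Young's rule. First I would observe that $S_n$ acts transitively on $X$ with the stabilizer of the $k$-set $\{1,\dots,k\}$ equal to the Young subgroup $S_k \times S_{n-k}$. Therefore $X \cong S_n/(S_k \times S_{n-k})$ as $S_n$-sets and
\[
\mathcal{F} \cong \mathrm{Ind}_{S_k \times S_{n-k}}^{S_n}(\mathbf{1}),
\]
which is the classical Young permutation module $M^{(n-k,k)}$; by reindexing we may assume $k \le n-k$, so $s=k$.

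Next I would invoke Young's rule, which gives
\[
M^{\lambda} \cong \bigoplus_{\mu \vdash n} K_{\mu,\lambda}\, V_{\mu},
\]
where $V_\mu$ is the irreducible $S_n$-module indexed by the partition $\mu$ and $K_{\mu,\lambda}$ is the Kostka number counting semistandard Young tableaux of shape $\mu$ and content $\lambda$. The task thus reduces to computing $K_{\mu,(n-k,k)}$ for every partition $\mu$ of $n$.

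I would then exploit the fact that the content $(n-k,k)$ uses only the two symbols $1$ and $2$. In any semistandard tableau the columns are strictly increasing, so no column can have length greater than $2$, forcing $\mu$ to be a two-row partition $\mu=(n-i,i)$ for some $0 \le i \le n/2$. In such a shape the second row must be filled entirely with $2$'s (each strictly greater than the $1$ directly above), consuming $i$ of the available twos; the first row then receives $n-k$ ones followed by $k-i$ twos. This filling exists and is unique precisely when $0 \le i \le \min(k,n-k)=s$, giving $K_{(n-i,i),(n-k,k)}=1$ in this range and $0$ otherwise. Substituting back yields
\[
\mathcal{F} \cong \bigoplus_{i=0}^{s} V_{(n-i,i)} \;=\; \bigoplus_{i=0}^{s} V_{\alpha_i},
\]
each irreducible appearing with multiplicity exactly one, which establishes both the multiplicity-freeness and the explicit identification of the summands.

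No step is a serious obstacle: Young's rule is a standard black box and the Kostka analysis for two-symbol content is routine bookkeeping. An alternative route that avoids Young's rule is to verify first that $(S_n, S_k \times S_{n-k})$ is a Gelfand pair -- the double cosets $S_k \times S_{n-k} \,\sigma\, S_k \times S_{n-k}$ are parametrized by the size of the intersection $\sigma\{1,\dots,k\} \cap \{1,\dots,k\}$ and are stable under the involution $\sigma \mapsto \sigma^{-1}$, yielding multiplicity-freeness directly -- and then identify the irreducibles $V_{\alpha_i}$ inductively via the $S_n$-equivariant lowering operator $\partial\colon \mathcal{F}(J(n,k)) \to \mathcal{F}(J(n,k-1))$ that sends a $k$-set to the sum of its $(k-1)$-subsets, whose kernels cut out the successive harmonic subspaces $V_{\alpha_i}$.
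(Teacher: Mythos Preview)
Your argument is correct. The identification $\mathcal{F}\cong M^{(n-k,k)}$ via the transitive action with Young-subgroup stabilizer, followed by Young's rule and the elementary Kostka computation for two-letter content, is the standard textbook route to this decomposition, and every step you outline goes through without difficulty. The alternative Gelfand-pair argument you sketch is equally valid.

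There is nothing to compare against, however: the paper does not supply a proof of this theorem. It merely states the result and cites Stanton \cite{Stanton} for it, treating the decomposition as background. So your proposal is not an alternative to the paper's proof but rather a self-contained proof where the paper has none; it would be a perfectly acceptable replacement for the bare citation.
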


For example, if $n=6$ and $k=2$ then  the irreducible components of $\mathcal{F}$ are in correspondence with the Young diagrams
\ytableausetup{smalltableaux}
\begin{center}
 \ \ 
\ydiagram{6,0} \  \ \ \ 
\ydiagram{5,1} \  \ \ \ 
\ydiagram{4,2} 
\end{center}

\subsection{Gelfand-Tsetlin basis of $\mathcal{F}$}
\label{Gelfand-Tsetlin basis of F}

From Theorem \ref{decomposition of F} we see that $\mathcal{F}$ has a well-defined --up to scalars-- Gelfand-Tsetlin basis and that there is a bijection between the set of elements of this GT-basis and the set of standard tableaux of shape $(n-a,a)$ where $a$ runs from $0$ to $s$.

Let us give a more explicit description of the GT-basis of $\mathcal{F}$. 
Consider the space $ \mathcal{F}$ as an $S_i$-module for $i=1,...,n$, and 
 let 
 $\mathcal{F}_{i,\lambda} $ be the isotypic component corresponding to the irreducible representation $\lambda$ of $S_i$ so that for each $i$ we have a decomposition
 $$
 \mathcal{F}= \bigoplus_{\lambda \in Irr(S_i)} \mathcal{F}_{i,\lambda}
 $$
 where $\mathcal{F}_{i,\lambda} \perp \mathcal{F}_{i,\lambda'}$ if $\lambda \neq \lambda'$.
For each standard tableau $\lambda_1 \nearrow \lambda_2 \nearrow ... \nearrow \lambda_n$ let
$$
 \mathcal{F}_{\lambda_1 \nearrow \lambda_2 \nearrow ... \nearrow \lambda_n} 
 =
  \mathcal{F}_{1,\lambda_1} \cap \mathcal{F}_{2,\lambda_2} \cap ... \cap
 \mathcal{F}_{n,\lambda_n}
$$
Then Theorem \ref{decomposition of F} shows that $\mathcal{F}$ has an orthogonal decomposition in one-dimensional subspaces
$$
 \mathcal{F} =  \bigoplus_{\lambda_1 \nearrow \lambda_2 \nearrow ... \nearrow \lambda_n}
 \mathcal{F}_{\lambda_1 \nearrow \lambda_2 \nearrow ... \nearrow \lambda_n}
 $$
where $\lambda_n$ runs through all representations of $S_n$ corresponding to Young diagrams $(n-a,a)$ for $a=0,...,s$ (see Figure \ref{GT of the Johnson graph}).

\begin{figure}[ht] 
\begin{center}
\includegraphics[scale=0.45]{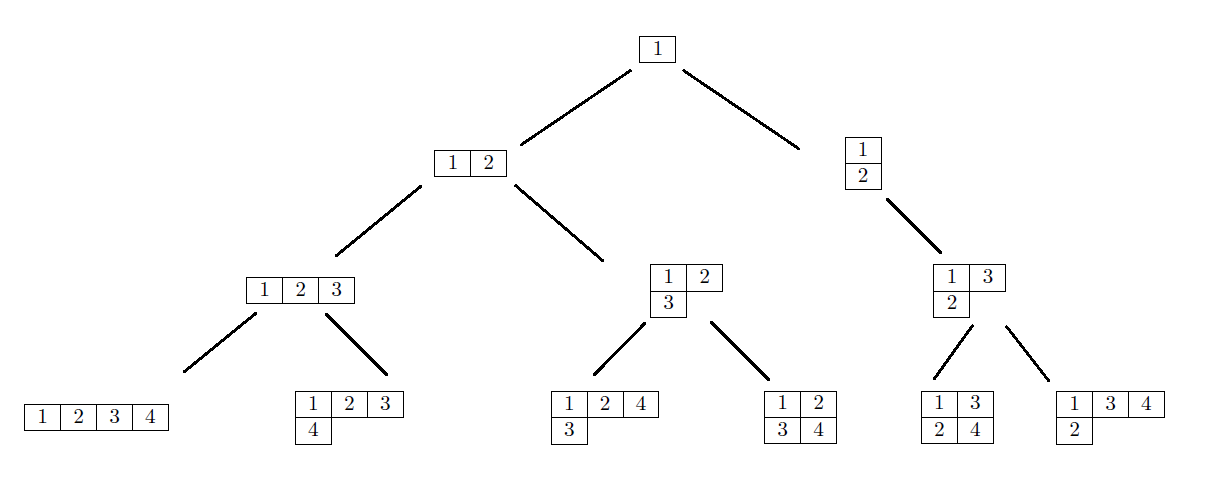} 
\caption{The leaves of this tree parametrize the Gelfand-Tsetlin basis of the space $\mathcal{F}$ of functions on the Johnson graph $J(4,2)$.
}
\label{GT of the Johnson graph}
\end{center}
\end{figure}

\section{The intermediate bases}
\label{Intermediate}

Let us describe schematically the Fast Fourier Transform algorithm for the Johnson graph. The input is a vector $f$ in the space $\mathcal{F}$ of functions on the set $X$ of $k$-sets, written in the delta function basis $B_0$, given as a column vector $[f]_{B_0}$. The output of the algorithm is a column vector representing the  vector $f$ written in the basis $B_n$, the Gelfand-Tsetlin basis of $\mathcal{F}$. In other words, the objective is to apply  the change of basis matrix to a given column vector:
$$
[f]_{B_n} = [B_0]_{B_n} \  [f]_{B_0}
$$
Our technique to realize this matrix multiplication is to construct a sequence of intermediate orthonormal bases $B_1, B_2, ..., B_{n-1}$ such that 
$$
[B_0]_{B_n} = [B_0]_{B_1}\ \  [B_1]_{B_2} \ \ ...\ \ [B_{n-1}]_{B_n} 
$$
is a decomposition where each factor is a very sparse matrix.

 \subsection{Definition of the basis $B_i$}
 
 We represent a $k$-set by a word in the alphabet $\{1,2\}$ as follows. The element $i \in \{1,...,n\}$ belongs to the $k$-subset if and only if the place $i$ of the  word is occupied by the letter $1$. For example, 
$$
\{2,3,6,8\} \subseteq \{1,...,9\} \ \ \ \ \ \rightarrow \ \ \ \ \ \  2\ 1\ 1\ 2\ 2\ 1\ 2\ 1\ 2
$$
So, from now on, we identify $X$ with the set of words of length $n$ in the alphabet $\{1,2\}$ such that the letter $1$ appears $k$ times.  The group $S_n$ acts on $X$ in the natural way.

 
 The subgroup $S_i$ with $1 \leq i \leq n$ acts on the first $i$ letters fixing the last $n-i$ letters of the word. 
Let $X_i$ be the set of words --in the alphabet $\{1,2\}$-- of length $n-i$ where the letter $1$ appears $k$ times or less.
 
For each  $w \in X_i$  we define $X^w$ as the subset of those words $x \in X$  that consist of a concatenation $x=w'w$ for some word $w'$ of length $i$. Observe  that each subset $X^w$ is stabilized by the action of the subgroup $S_i$.

 Let $\mathcal{F}^w$ be the subspace of $\mathcal{F}$ spanned by the delta functions $\delta(x)$ such that $x \in X^w$:
 $$
 \mathcal{F}^w = \bigoplus_{x \in X^w} \mathbb{C} \ \delta(x).
 $$
 Then $\mathcal{F}$ decomposes as
 $$
 \mathcal{F} = \bigoplus_{w \in X_i} \mathcal{F}^w
 $$
 and each subspace $\mathcal{F}^w$ is invariant by the action of $S_i$.
 
 The following is a key observation. Suppose that the letter $1$ appears $k-r$ times in the word $w$, where $0 \leq r \leq k$. Then the subset $X^w$ consists of those words of the form $w' w$ such that  $w'$ is of  length $i$ and it has exactly $r$ appearances of the letter $1$. This means that $X^w$ has the structure of the Johnson graph $J(i,r)$ and, when acted by the subgroup $S_i$, the space of $\mathbb{C}$-valued functions on $X^w$ decomposes as an $S_i$-module in a multiplicity-free way according to the  formula of Theorem \ref{decomposition of F}.
 
 As a consequence, each  subspace $\mathcal{F}^w$ has a Gelfand-Tsetlin decomposition
 $$
 \mathcal{F}^w =  \bigoplus_{\lambda_1 \nearrow \lambda_2 \nearrow ... \nearrow \lambda_i}
 \mathcal{F}^w_{\lambda_1 \nearrow \lambda_2 \nearrow ... \nearrow \lambda_i},
 $$
 where $\lambda_i$ runs over all Young diagrams $(i-a , a)$ with $0 \leq a \leq min(r,i-r)$.
Then $\mathcal{F}^w$ has a Gelfand-Tsetlin basis $B^w$, uniquely determined up to scalars.
We define, up to scalars,  the $i$-th intermediate basis of $\mathcal{F}$ as
 $$
 B_i = \bigsqcup_{w \in X_i} B^w.
 $$
 
From  Theorem \ref{decomposition of F}, we see that the basis $B^w$ is parametrized by the set of  standard tableaux of shape $(i-a , a)$ with $0 \leq a \leq min(r,i-r)$. On the other hand, the word $w$ runs over the set $X_i$. Figure \ref{Intermediate bases} illustrates the structure of the intermediate bases.

 \begin{figure}[ht] 
\begin{center}
\includegraphics[scale=0.6]{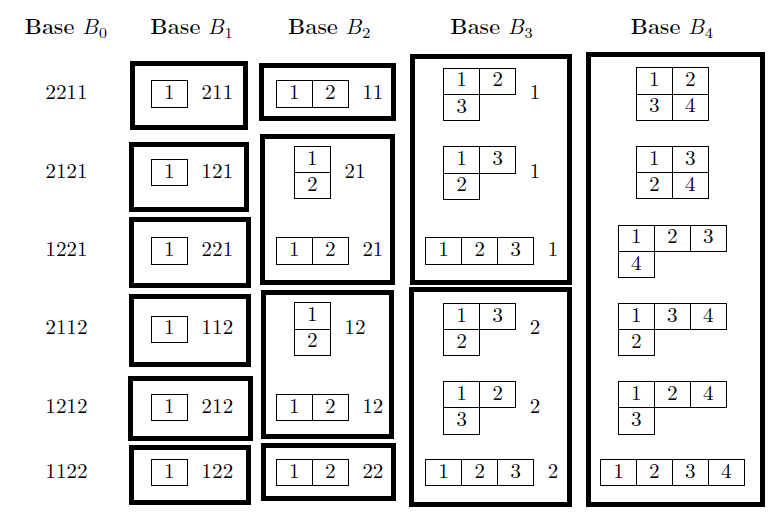} 
\caption{Labels of the intermediate bases in the case $n=4, k=2$.  The boxes in each column represent the decomposition $B_i = \sqcup_{w \in X_i} B^w$.
}
\label{Intermediate bases}
\end{center}
\end{figure}

 \subsection{Sparsity of the change of basis matrix $[B_i]_{B_{i-1}}$}
 Let $\mathcal{F}_{i,\lambda} $ be defined as in Section \ref{Gelfand-Tsetlin basis of F}.
  On the other hand, for $i=1,...,n$ and $c \in \{1,2\}$ let us define $\mathcal{F}^{i,c}$ as the subspace of $\mathcal{F}$ generated by the delta functions $\delta(x)$ such that the word $x$ has the letter $c$ in the place $i$. For each $i$ we have a decomposition 
 $$
 \mathcal{F}=  \mathcal{F}^{i,1} \oplus \mathcal{F}^{i,2}
 $$
 with $ \mathcal{F}^{i,1} \perp \mathcal{F}^{i,2}$.
 Let $w \in X_i$  
 $$
 w= c_{i+1} c_{i+2} ... c_n.
 $$
 Then 
 $$
  \mathcal{F}^w = \mathcal{F}^{i+1,c_{i+1}}  \cap ... \cap
 \mathcal{F}^{n-1,c_{n-1}} \cap \mathcal{F}^{n,c_{n}}
$$
 and the one-dimensional subspaces generated by elements of the base $B_i$ can be expressed as
 $$
 \mathcal{F}^w_{\lambda_1 \nearrow \lambda_2 \nearrow ... \nearrow \lambda_i}
=
 \mathcal{F}_{1,\lambda_1} \cap \mathcal{F}_{2,\lambda_2} \cap ... \cap
 \mathcal{F}_{i,\lambda_i} \cap \mathcal{F}^{i+1,c_{i+1}}  \cap ... \cap
 \mathcal{F}^{n-1,c_{n-1}} \cap \mathcal{F}^{n,c_{n}}.
$$

Let us compare the base $B_i$ with the base $B_{i-1}$. 

\begin{definition}
Let $b \in B_{i-1}$ and $b' \in B_{i}$. We say that $b$ and $b'$ are \textit{related} if both belong to the subspace 
$$
\mathcal{F}_{1,\lambda_1}  \cap ... \cap  \mathcal{F}_{i-1,\lambda_{i-1}}  \cap \mathcal{F}^{i+1,c_{i+1}}  \cap ... \cap
 \mathcal{F}^{n,c_{n}}
$$
for some Young diagrams $ \lambda_1,..., \lambda_{i-1}$ and some letters $c_{i+1},..., c_n$ in $\{1,2\}$.
\end{definition}

\begin{lemma}
\label{orthogonality}
Suppose that the subspace 
$$
\mathcal{F}_{1,\lambda_1}  \cap ... \cap  \mathcal{F}_{i-1,\lambda_{i-1}} \cap
 \mathcal{F}_{i,\lambda_i} \cap \mathcal{F}^{i+1,c_{i+1}}  \cap ... \cap
 \mathcal{F}^{n,c_{n}}
 $$
 generated by an element of the basis $B_i$ is not orthogonal to the subspace
 $$
\mathcal{F}_{1,\lambda'_1}  \cap ... \cap  \mathcal{F}_{i-1,\lambda'_{i-1}} \cap
 \mathcal{F}^{i,c'_i} \cap \mathcal{F}^{i+1,c'_{i+1}}  \cap ... \cap
 \mathcal{F}^{n,c'_{n}}
 $$
 generated by an element of the basis $B_{i-1}$.
 Then 
 $$
 \lambda_1 = \lambda'_1, \ \ ...\ \ , \lambda_{i-1} = \lambda'_{i-1}
 \ \ \ \ \ \  and \ \ \ \ \ \ \ 
 c_{i+1}=c'_{i+1}, \ \ ... \ \ , c_n=c'_n.
 $$
\end{lemma}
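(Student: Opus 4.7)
The plan is to apply orthogonality of isotypic components coordinate by coordinate, using the general principle that if $U \subseteq A$ and $U' \subseteq A'$ with $A \perp A'$, then $U \perp U'$. I will prove the contrapositive: if any of the claimed equalities fails, the two displayed subspaces are forced to be orthogonal.

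First I would recall the two orthogonality facts already available in the text. For each fixed $j \in \{1, \ldots, n\}$, the decomposition of $\mathcal{F}$ into $S_j$-isotypic components is orthogonal, so $\mathcal{F}_{j,\mu} \perp \mathcal{F}_{j,\mu'}$ whenever $\mu \neq \mu'$ in $\mathrm{Irr}(S_j)$. This was noted in Section \ref{Gelfand-Tsetlin basis of F}. Separately, for each $j$, the decomposition $\mathcal{F} = \mathcal{F}^{j,1} \oplus \mathcal{F}^{j,2}$ is orthogonal because the delta basis vectors split cleanly between the two summands according to the $j$-th letter of the word.

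Next I would compare the two intersected subspaces index by index. Both the $B_i$-subspace and the $B_{i-1}$-subspace lie inside $\mathcal{F}_{j,\lambda_j}$ (respectively $\mathcal{F}_{j,\lambda'_j}$) for every $j = 1, \ldots, i-1$, and inside $\mathcal{F}^{j,c_j}$ (respectively $\mathcal{F}^{j,c'_j}$) for every $j = i+1, \ldots, n$. Suppose $\lambda_j \neq \lambda'_j$ for some $j \leq i-1$. Then the first subspace sits inside $\mathcal{F}_{j,\lambda_j}$ and the second inside $\mathcal{F}_{j,\lambda'_j}$, and by the isotypic orthogonality the two subspaces are perpendicular, contradicting the hypothesis. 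The same argument applied with $\mathcal{F}^{j,c_j} \perp \mathcal{F}^{j,c'_j}$ handles the case $c_j \neq c'_j$ for some $j \geq i+1$.

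I do not expect a genuine obstacle here: the claim is really just a bookkeeping statement about nested intersections of mutually orthogonal decompositions, and the indices $j=i$ are deliberately excluded from the conclusion because the two sides involve different kinds of constraints at that position ($\mathcal{F}_{i,\lambda_i}$ on one side versus $\mathcal{F}^{i,c'_i}$ on the other), which is precisely what the subsequent analysis of sparsity of $[B_i]_{B_{i-1}}$ will exploit.
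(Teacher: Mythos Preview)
Your proposal is correct and follows essentially the same argument as the paper: prove the contrapositive by observing that a disagreement at some index $j\le i-1$ forces orthogonality via $\mathcal{F}_{j,\lambda_j}\perp\mathcal{F}_{j,\lambda'_j}$, and a disagreement at some index $j\ge i+1$ forces orthogonality via $\mathcal{F}^{j,c_j}\perp\mathcal{F}^{j,c'_j}$. The paper's proof is simply a terser version of what you wrote.
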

\begin{proof}
If $ \lambda_j \neq \lambda'_j$ for some $j$ with $1\leq j \leq i-1$, then  $$\mathcal{F}_{j,\lambda_j} \perp \mathcal{F}_{j,\lambda'_j}$$ and the two subspaces are orthogonal. 
If $ c_j \neq c'_j$ for some $j$ with $i+1\leq j \leq n$, then  $$\mathcal{F}^{j,c_j} \perp \mathcal{F}^{j,c'_j}$$ and the two subspaces are orthogonal.
\end{proof}

It is immediate that Lemma \ref{orthogonality} is equivalent to the following.
\begin{cor}
If $b \in B_{i-1}$ and $b' \in B_{i}$ are not orthogonal then they are related.
\end{cor}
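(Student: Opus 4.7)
The plan is to show that this corollary is just a reformulation of the lemma: non-orthogonality forces matching of indices, and matching of indices places both vectors in the common subspace from the definition of \emph{related}. The argument is purely definitional, and the main (very mild) obstacle is keeping the bookkeeping of indices straight between the two bases.

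First I would unwind the parametrizations. By construction, $b' \in B_i$ spans a one-dimensional subspace of the form
$$
\mathcal{F}_{1,\lambda_1} \cap \ldots \cap \mathcal{F}_{i-1,\lambda_{i-1}} \cap \mathcal{F}_{i,\lambda_i} \cap \mathcal{F}^{i+1,c_{i+1}} \cap \ldots \cap \mathcal{F}^{n,c_n}
$$
for some chain $\lambda_1 \nearrow \ldots \nearrow \lambda_i$ and some letters $c_{i+1}, \ldots, c_n \in \{1,2\}$, while $b \in B_{i-1}$ spans a subspace of the form
$$
\mathcal{F}_{1,\lambda'_1} \cap \ldots \cap \mathcal{F}_{i-1,\lambda'_{i-1}} \cap \mathcal{F}^{i,c'_i} \cap \mathcal{F}^{i+1,c'_{i+1}} \cap \ldots \cap \mathcal{F}^{n,c'_n}
$$
for some chain $\lambda'_1 \nearrow \ldots \nearrow \lambda'_{i-1}$ and letters $c'_i, c'_{i+1}, \ldots, c'_n$. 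The hypothesis that $b$ and $b'$ are not orthogonal is exactly the hypothesis of Lemma \ref{orthogonality}.

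Next I would apply the lemma directly to conclude $\lambda_j = \lambda'_j$ for $1 \le j \le i-1$ and $c_j = c'_j$ for $i+1 \le j \le n$. With these equalities in hand, the subspace containing $b'$ is contained in
$$
\mathcal{F}_{1,\lambda_1} \cap \ldots \cap \mathcal{F}_{i-1,\lambda_{i-1}} \cap \mathcal{F}^{i+1,c_{i+1}} \cap \ldots \cap \mathcal{F}^{n,c_n}
$$
(we simply drop the factor $\mathcal{F}_{i,\lambda_i}$), and the subspace containing $b$ is likewise contained in the same intersection (dropping the factor $\mathcal{F}^{i,c'_i}$). Hence $b$ and $b'$ both lie in that common subspace, which is exactly what it means for them to be related. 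This completes the proof; no further estimates or computations are required.
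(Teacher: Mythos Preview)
Your argument is correct and is exactly the approach the paper takes: the paper simply declares the corollary to be an immediate reformulation of Lemma~\ref{orthogonality}, and your proposal spells out precisely that unpacking of definitions. Nothing further is needed.
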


 \begin{cor}
 \label{sparsity}
 For each element $b$ of the base $B_i$ there is at most two elements of the base $B_{i-1}$ not orthogonal to $b$. 
 \end{cor}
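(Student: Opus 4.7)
The plan is to combine the previous corollary with a direct count of related elements. By that corollary, any $b' \in B_{i-1}$ which is not orthogonal to $b$ must be related to it, so it suffices to verify that at most two elements of $B_{i-1}$ can be related to a fixed $b \in B_i$.

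Unpacking the parametrizations, $b \in B_i$ is labeled by a standard Young tableau $\lambda_1 \nearrow \lambda_2 \nearrow \ldots \nearrow \lambda_i$ together with a word $w = c_{i+1} c_{i+2} \ldots c_n \in X_i$, while $b' \in B_{i-1}$ is labeled by a chain $\lambda'_1 \nearrow \ldots \nearrow \lambda'_{i-1}$ together with a word $w' = c'_i c'_{i+1} \ldots c'_n \in X_{i-1}$. The relatedness condition spelled out in Lemma \ref{orthogonality} forces $\lambda'_j = \lambda_j$ for $1 \leq j \leq i-1$ and $c'_j = c_j$ for $i+1 \leq j \leq n$. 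Consequently every parameter of $b'$ is determined by $b$, except for the single letter $c'_i \in \{1,2\}$, which leaves at most two candidate labels and hence at most two related elements of $B_{i-1}$.

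There is no real obstacle here; the relational machinery built in Lemma \ref{orthogonality} and its corollary already does all the work, reducing the statement to the observation that the alphabet has only two letters. The only point worth remarking is that not every choice of $c'_i$ need produce a valid label (the resulting $w'$ must have at most $k$ ones, and the chain $\lambda_1 \nearrow \ldots \nearrow \lambda_{i-1}$ must correspond to a component actually appearing in $\mathcal{F}^{w'}$ viewed as an $S_{i-1}$-module), but such constraints can only decrease the count, which is harmless for an upper bound.
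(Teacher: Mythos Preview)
Your argument is correct and follows essentially the same route as the paper: invoke Lemma~\ref{orthogonality} (or its immediate corollary) to pin down all parameters of a non-orthogonal $b'\in B_{i-1}$ except the single letter $c_i\in\{1,2\}$, leaving at most two possibilities. Your additional remark that some choices of $c_i$ may fail to yield a valid label is a nice clarification the paper omits, but it does not change the approach.
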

\begin{proof}
Suppose that the element $b$ of $B_i$ generates the subspace 
$$
\mathcal{F}_{1,\lambda_1}  \cap ... \cap  \mathcal{F}_{i-1,\lambda_{i-1}} \cap
 \mathcal{F}_{i,\lambda_i} \cap \mathcal{F}^{i+1,c_{i+1}}  \cap ... \cap
 \mathcal{F}^{n,c_{n}}.
$$
By Lemma \ref{orthogonality}, the subspace generated by an element of $B_{i-1}$ not orthogonal to $b$ is of the form 
$$
\mathcal{F}_{1,\lambda_1}  \cap ... \cap  \mathcal{F}_{i-1,\lambda_{i-1}} \cap
 \mathcal{F}^{i,c_i} \cap \mathcal{F}^{i+1,c_{i+1}}  \cap ... \cap
 \mathcal{F}^{n,c_{n}},
 $$
 where $c_i =1 $ or  $c_i =2 $.
\end{proof}

\begin{thm}
\label{Main theorem}
Let $B_0$ be the delta function basis of $\mathcal{F}$ and let $B_n$ be a Gelfand-Tsetlin basis of $\mathcal{F}$. We assume that the matrices $[B_{i-1}]_{B_{i}}$ for $i=2,3,...,n$ have been computed. Then, given a column vector $[f]_{B_0}$ with $f \in \mathcal{F}$, the column vector $[f]_{B_n}$ given by 
$$
[f]_{B_n} = [B_0]_{B_n} \  [f]_{B_0}
$$
can be computed using at most $2(n-1) \binom{n}{k}$ operations.
\end{thm}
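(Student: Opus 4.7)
The plan is to realize the multiplication $[f]_{B_n} = [B_0]_{B_n}\,[f]_{B_0}$ by applying the factorization of $[B_0]_{B_n}$ one factor at a time, producing the chain of intermediate column vectors $[f]_{B_1}, [f]_{B_2}, \ldots, [f]_{B_n}$ via
$$
[f]_{B_i} = [B_{i-1}]_{B_i} \, [f]_{B_{i-1}}.
$$
The total running cost is then the sum over $i$ of the number of operations needed to multiply the sparse matrix $[B_{i-1}]_{B_i}$ by a vector of length $\dim \mathcal{F} = \binom{n}{k}$.

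The sparsity bound I would use is already established in Corollary \ref{sparsity}: every basis element $b' \in B_i$ is non-orthogonal to at most two elements of $B_{i-1}$. Since both bases are orthonormal, this says precisely that each column of $[B_i]_{B_{i-1}}$ has at most two nonzero entries, so the transpose $[B_{i-1}]_{B_i}$ has at most two nonzero entries per row. Therefore computing $[B_{i-1}]_{B_i}\,[f]_{B_{i-1}}$ requires at most two multiplications and one addition per output coordinate, hence at most $2\binom{n}{k}$ operations in the model where one complex multiplication paired with one addition counts as a single operation.

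It remains to account for the number of genuine transitions. The factorization as written involves $n$ factors, but I would observe that the factor with $i = 1$ is essentially trivial. Indeed, for $i = 1$ every $w \in X_1$ produces a set $X^w$ of size at most one, because prepending $1$ versus $2$ to a word of length $n-1$ alters the total count of ones by one and only one choice can result in a word with exactly $k$ ones. Consequently each $\mathcal{F}^w$ is one-dimensional, the basis $B_1$ agrees with $B_0$ up to unit scalars, and $[B_0]_{B_1}$ is diagonal with entries of modulus one; applying it contributes no arithmetic operations under the chosen model (or can be absorbed into the normalization of $B_1$). Only the $n-1$ factors with $i = 2, \ldots, n$ contribute, and summing $2\binom{n}{k}$ across them yields the claimed bound $2(n-1)\binom{n}{k}$.

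The substantive work has already been concentrated in the construction of the intermediate bases and in Corollary \ref{sparsity}; the theorem itself is mostly bookkeeping. The one content-bearing step is the verification that the initial transition $B_0 \to B_1$ is trivial, so that exactly $n-1$ sparse multiplications are performed rather than $n$; this is the place where I would be most careful to confirm that the normalizations of $B_1$ inherited from the abstract definition agree (up to scalars) with the delta basis.
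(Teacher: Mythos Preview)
Your proposal is correct and essentially identical to the paper's own proof: factor $[B_0]_{B_n}$ through the intermediate bases, invoke Corollary~\ref{sparsity} to bound the cost of each step by $2\binom{n}{k}$, and observe that the transition $B_0 \to B_1$ is trivial so only $n-1$ genuine steps remain. Your remark that $[B_0]_{B_1}$ is diagonal with unit-modulus entries (rather than literally the identity) is slightly more careful than the paper, which simply asserts $[B_0]_{B_1}$ is the identity; likewise your distinction between row-sparsity of $[B_{i-1}]_{B_i}$ and column-sparsity of $[B_i]_{B_{i-1}}$ is a bit more precise than the paper's phrasing, but either way the total number of nonzero entries, and hence the operation count, is the same.
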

\begin{proof}
By Corollary \ref{sparsity} we see that each column of the matrix 
$[B_{i-1}]_{B_{i}}$ has at most two non-zero elements. Then the application of  the matrix $[B_{i-1}]_{B_{i}}$ to a generic column vector can be done using at most $2 \binom{n}{k}$ operations. 
Observe that $[B_0]_{B_1}$ is the identity matrix. We have 
\begin{eqnarray*}
[B_0]_{B_n} &= [B_0]_{B_1}\ \  [B_1]_{B_2} \ \ ...\ \ [B_{n-1}]_{B_n}  = [B_1]_{B_2} \ \ ...\ \ [B_{n-1}]_{B_n}.
\end{eqnarray*}
Then the successive applications of the $n-1$ matrices can be done in at most $2(n-1) \binom{n}{k}$ operations.
\end{proof}

\subsection{Example}
Consider the case $n=4$, $k=2$. For each vector $b$ of the basis $B_{i}$, there exists a unique word  $w \in X_i$ and a unique standard tableau $\lambda_1 \nearrow \lambda_2 \nearrow ... \nearrow \lambda_i$ such that $b \in \mathcal{F}^w_{\lambda_1 \nearrow \lambda_2 \nearrow ... \nearrow \lambda_i}$. Then $b$ is a linear combination of those elements of $B_{i-1}$ that belong to the space $\mathcal{F}^{\bar{w}}_{\lambda_1 \nearrow \lambda_2 \nearrow ... \nearrow \lambda_{i-1}}$, where $\bar{w}=1w$ or $\bar{w}=2w$. Then the matrices $[B_i]_{B_{i-1}}$  have the form

\[
\left[  B_{1}\right]  _{B_{2}}=\left[
\begin{array}
[c]{cccccc}%
\ast & 0 & 0 & 0 & 0 & 0\\
0 & \ast & \ast & 0 & 0 & 0\\
0 & \ast & \ast & 0 & 0 & 0\\
0 & 0 & 0 & \ast & \ast & 0\\
0 & 0 & 0 & \ast & \ast & 0\\
0 & 0 & 0 & 0 & 0 & \ast
\end{array}
\right]  \text{ \ \ \ }\left[  B_{2}\right]  _{B_{3}}=\left[
\begin{array}
[c]{cccccc}%
\ast & 0 & \ast & 0 & 0 & 0\\
0 & \ast & 0 & 0 & 0 & 0\\
\ast & 0 & \ast & 0 & 0 & 0\\
0 & 0 & 0 & \ast & 0 & 0\\
0 & 0 & 0 & 0 & \ast & \ast\\
0 & 0 & 0 & 0 & \ast & \ast
\end{array}
\right]
\]
\\
\[
\left[  B_{3}\right]  _{B_{4}}=\left[
\begin{array}
[c]{cccccc}%
\ast & 0 & 0 & 0 & \ast & 0\\
0 & \ast & 0 & \ast & 0 & 0\\
0 & 0 & \ast & 0 & 0 & \ast\\
0 & \ast & 0 & \ast & 0 & 0\\
\ast & 0 & 0 & 0 & \ast & 0\\
0 & 0 & \ast & 0 & 0 & \ast
\end{array}
\right].
\]

\begin{figure}[ht] 
\begin{center}
\includegraphics[scale=0.5]{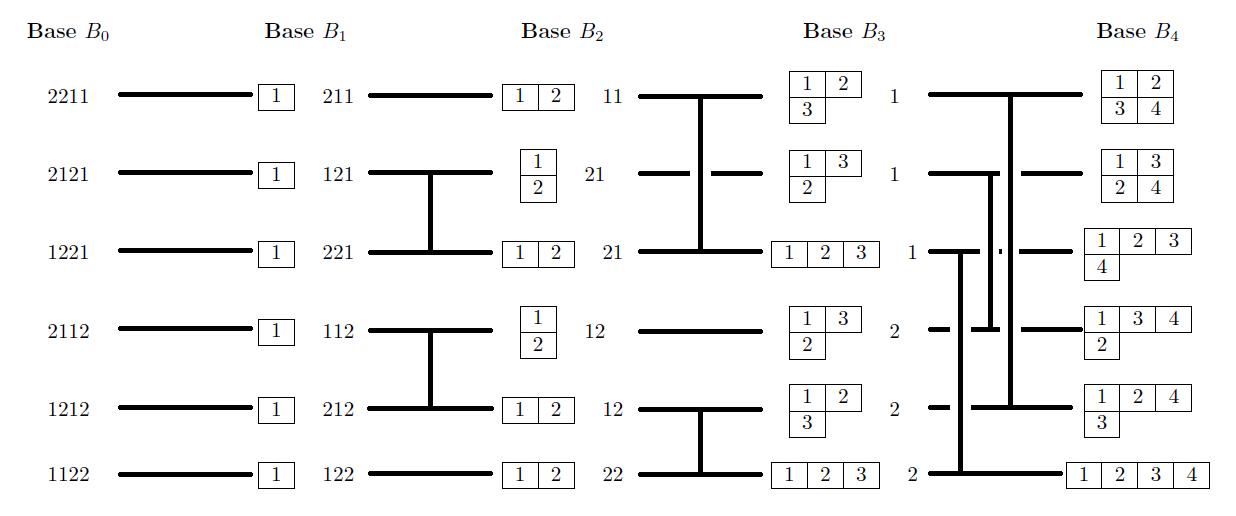} 
\caption{An illustration of the sparsity of the matrices $[B_{i-1}]_{B_{i}}$. A label of an element $b \in B_{i-1}$ is  connected with a label of an element $b' \in B_{i}$ if and only if they are related.  }
\label{Sparsity}
\end{center}
\end{figure}

\section{Connection with the Robinson-Schensted insertion algorithm}
\label{RS}

In Figure \ref{Sparsity} the vertical order of the labels of the elements of each basis $B_i$ has been carefully chosen in order to simplify the figure. In fact, the order is such that each horizontal line corresponds to a well known process: the Robinson-Schensted (RS) insertion algorithm (see  \cite{Fulton}). 

Observe that each horizontal line gives the sequence --reading from left to right-- that is obtained by applying the RS insertion algorithm to a word corresponding to an element of the basis $B_0$, which is a word in the alphabet $\{1,2\}$. The elements of this sequence are triples $(P,Q,\omega)$ where $P$ is a semistandard tableau, $Q$ is a standard tableau and $\omega$ is a word in the alphabet $\{1,2\}$. In our situation $P$ is filled with letters in $\{1,2\}$ so its height is at most $2$. It turns out that the triple $(P,Q,\omega)$ is determined by the pair $(Q,\omega)$ so $P$ can be ommited.

\begin{definition}
Let $b \in B_{i-1}$ and $b' \in B_{i}$. We say that $b$ and $b'$ are \textit{RS-related} if the label of $b'$ is obtained by applying the RS insertion step to the label of $b$. 
\end{definition}

From the definitions it is immediate the following (see Figure \ref{Sparsity} for an illustration).
\begin{thm}
If $b \in B_{i-1}$ and $b' \in B_{i}$ are RS-related then they are related.
\end{thm}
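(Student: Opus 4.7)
The plan is to unpack the two notions of label and show that RS-relatedness directly encodes the intersection conditions defining relatedness. First, I would fix the dictionary between the RS label and the indexing of the basis. Each element $b \in B_{i-1}$ lies in a unique one-dimensional summand
$$
\mathcal{F}_{1,\mu_1} \cap \ldots \cap \mathcal{F}_{i-1,\mu_{i-1}} \cap \mathcal{F}^{i,c_i} \cap \mathcal{F}^{i+1,c_{i+1}} \cap \ldots \cap \mathcal{F}^{n,c_n},
$$
associated to a word $w = c_i c_{i+1} \ldots c_n \in X_{i-1}$ and a chain $\mu_1 \nearrow \ldots \nearrow \mu_{i-1}$. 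Under the RS labeling, the recording tableau $Q$ encodes exactly the chain $\mu_1 \nearrow \ldots \nearrow \mu_{i-1}$, and the residual word is $\omega = w$.

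Next, I would spell out the effect of one insertion step. Removing the leftmost letter $c_i$ of $\omega$ and inserting it into the (unique, since heights are at most $2$) semistandard tableau of shape $\mu_{i-1}$ with entries in $\{1,2\}$ adds exactly one new box; recording that box with the label $i$ produces a standard tableau $Q'$ that extends $Q$ by a single box, so the associated chain is $\mu_1 \nearrow \ldots \nearrow \mu_{i-1} \nearrow \mu_i$ for some $\mu_i$. The new residual word is $\omega' = c_{i+1} \ldots c_n \in X_i$. Therefore the element $b' \in B_i$ RS-related to $b$ generates
$$
\mathcal{F}_{1,\mu_1} \cap \ldots \cap \mathcal{F}_{i-1,\mu_{i-1}} \cap \mathcal{F}_{i,\mu_i} \cap \mathcal{F}^{i+1,c_{i+1}} \cap \ldots \cap \mathcal{F}^{n,c_n}.
$$

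Comparing the two subspaces, both $b$ and $b'$ lie in
$$
\mathcal{F}_{1,\mu_1} \cap \ldots \cap \mathcal{F}_{i-1,\mu_{i-1}} \cap \mathcal{F}^{i+1,c_{i+1}} \cap \ldots \cap \mathcal{F}^{n,c_n},
$$
which is precisely the condition for being related. Once the label dictionary has been made precise, the theorem reduces to a one-line verification, and I expect no genuine obstacle. The only point that needs a moment of care is confirming that the RS labeling genuinely matches the parametrization $(\mu_1 \nearrow \ldots \nearrow \mu_{i-1},\, w)$ of the basis $B_{i-1}$ established in Section \ref{Intermediate}, which amounts to observing that a single step of RS insertion on a two-letter alphabet both adds exactly one box to the recording tableau and shortens the residual word by exactly one letter.
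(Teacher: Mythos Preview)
Your argument is correct and matches the paper's approach: the paper simply asserts that the result is ``immediate from the definitions'', and you have spelled out exactly that unpacking---one RS insertion step extends the recording tableau by a single box (so the chain $\mu_1 \nearrow \cdots \nearrow \mu_{i-1}$ is preserved and extended) while stripping the leading letter $c_i$ from the residual word, which is precisely the relatedness condition. One small slip: a semistandard tableau in $\{1,2\}$ is not determined by its shape alone (uniqueness also needs the content, which here is fixed by $\omega$ together with $k$), but this is harmless since your argument only uses the effect on $Q$ and $\omega$, not the identity of $P$.
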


\section{Application to the computation of isotypic components}
\label{Applications}

The upper bound we obtained for the algebraic complexity of the Fourier transform can be applied to the problem of computing the isotypic projections of a given function on the Johnson graph.

For $a=0,...,s$, let $\mathcal{F}_a$ be the isotypic component of  $\mathcal{F}$ corresponding to the Young diagram $(n-a,a)$ under the action of the group $S_n$. Since these components are orthogonal and expand the space $\mathcal{F}$, given a function $f \in  \mathcal{F}$ there are uniquely determined functions $f_a \in  \mathcal{F}_a$ such that 
$$
f= \sum_{a=0}^{s} f_a
$$
For $H\subseteq \{0,...,s\}$ let $f_H$ be defined by
$$
f_H=\sum_{a \in H} f_a
$$

\begin{thm}
\label{Single isotypic component}
Assume that the matrices $[B_{i-1}]_{B_{i}}$ for $i=2,3,...,n$ have been computed. Given a column vector  $[f]_{B_0}$ with $f \in \mathcal{F}$, the column vector $[f_H]_{B_0}$ can be computed using at most $4(n-1) \binom{n}{k}$  operations.
\end{thm}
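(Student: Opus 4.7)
The plan is to decompose the computation into three phases: forward transform, selective projection in the Gelfand--Tsetlin basis, and inverse transform. The crucial observation is that by Theorem \ref{decomposition of F}, the isotypic component $\mathcal{F}_a$ corresponds to the Young diagram $\alpha_a = (n-a,a)$, and every element $v_T$ of the Gelfand--Tsetlin basis $B_n$ indexed by a tableau $T = \lambda_1 \nearrow \cdots \nearrow \lambda_n$ lies in $\mathcal{F}_a$ precisely when $\lambda_n = \alpha_a$. Hence, in the basis $B_n$, the projection onto $\mathcal{F}_H := \bigoplus_{a \in H} \mathcal{F}_a$ is just a coordinate-wise restriction: zero out every entry of $[f]_{B_n}$ whose index $T$ satisfies $\lambda_n \notin \{\alpha_a : a \in H\}$.

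The first step is to compute $[f]_{B_n} = [B_0]_{B_n}\,[f]_{B_0}$. By Theorem \ref{Main theorem} this can be done in at most $2(n-1)\binom{n}{k}$ arithmetic operations. The second step is to obtain $[f_H]_{B_n}$ from $[f]_{B_n}$ by setting the unwanted coordinates to zero; this requires no arithmetic operations (only index lookups, which are free in our model).

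The third step is to apply the inverse transform $[B_n]_{B_0}$ to $[f_H]_{B_n}$ in order to recover $[f_H]_{B_0}$. For this we need the inverse counterpart of Corollary \ref{sparsity}: each column of $[B_{i}]_{B_{i-1}}$ has at most two nonzero entries. This follows from the same argument that gave Corollary \ref{sparsity}, but run in the other direction. An element $b \in B_{i-1}$ generates a subspace of the form
$$
\mathcal{F}_{1,\lambda_1} \cap \cdots \cap \mathcal{F}_{i-1,\lambda_{i-1}} \cap \mathcal{F}^{i,c_i} \cap \cdots \cap \mathcal{F}^{n,c_n},
$$
and by Lemma \ref{orthogonality} any non-orthogonal $b' \in B_i$ must generate a subspace with the same first $i-1$ shape parameters and the same last $n-i$ letters, so the only freedom is in the choice of $\lambda_i$ with $\lambda_{i-1} \nearrow \lambda_i$. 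Since we only encounter two-row shapes (by the restriction in the definition of $B^w$), there are at most two admissible $\lambda_i$. Thus each column of $[B_i]_{B_{i-1}}$ has at most two nonzero entries, so the inverse transform can likewise be performed in at most $2(n-1)\binom{n}{k}$ operations by the same telescoping argument as in Theorem \ref{Main theorem}.

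Adding the costs of the three phases gives $2(n-1)\binom{n}{k} + 0 + 2(n-1)\binom{n}{k} = 4(n-1)\binom{n}{k}$ operations, as required. The only subtle step is the sparsity claim for the inverse change-of-basis matrices, but as sketched above this follows immediately from the fact that the orthogonality argument in Lemma \ref{orthogonality} is symmetric in $B_{i-1}$ and $B_i$ together with the bounded branching of the Young graph on two-row shapes.
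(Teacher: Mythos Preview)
Your proof is correct and follows essentially the same three-phase approach as the paper: forward transform, zero out the unwanted Gelfand--Tsetlin coordinates, inverse transform. The paper's own proof simply asserts that the inverse Fourier transform costs another $2(n-1)\binom{n}{k}$ operations without further comment; your explicit argument that each column of $[B_i]_{B_{i-1}}$ also has at most two nonzero entries (via the bounded branching $\lambda_{i-1}\nearrow\lambda_i$ among two-row shapes) fills in a detail the paper leaves implicit, and is a welcome addition rather than a departure.
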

\begin{proof}
First we apply the Fourier transform to the function $f$, so that we obtain the column vector $[f]_{B_n}$ using $2( n-1) \binom{n}{k}$ operations. The basis $B_n$ is parametrized by all Young tableaux of shape $(n-a,a)$ for $a=0,...,s$. Then we substitute by $0$ the values of the entries of the vector  $[f]_{B_n}$ that correspond to Young tableaux of shape $(n-a,a)$ with $a$ not in $H$. The resulting column vector is $[f_H]_{B_n}$. Finally we apply the inverse Fourier transform to $[f_H]_{B_n}$ so that we obtain $[f_H]_{B_0}$ using $2(n-1) \binom{n}{k}$ more operations.
\end{proof}

\begin{thm}
\label{Weights of the decomposition}
Assume that the matrices $[B_{i-1}]_{B_{i}}$ for $i=2,3,...,n$ have been computed. Given a column vector  $[f]_{B_0}$ with $f \in \mathcal{F}$, all the weights $\|f_a\|^2$, for $a=0,...,s$,   can be computed using at most $(2n-1) \binom{n}{k}$ operations.
\end{thm}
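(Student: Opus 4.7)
The plan is to exploit the orthonormality of the Gelfand-Tsetlin basis $B_n$ together with Parseval's identity. By construction each $v_T \in B_n$ is a unit vector, and the family $\{v_T : \mathrm{shape}(T) = (n-a,a)\}$ is an orthonormal basis of the isotypic component $\mathcal{F}_a$, since Theorem \ref{decomposition of F} identifies this component with the Young diagram $(n-a,a)$. The isotypic components being mutually orthogonal, the coefficient vector $[f_a]_{B_n}$ agrees with $[f]_{B_n}$ on entries indexed by tableaux of shape $(n-a,a)$ and vanishes on the rest, which yields
$$
\|f_a\|^2 \;=\; \sum_{T\colon\, \mathrm{shape}(T)=(n-a,a)} \bigl|[f]_{B_n}(T)\bigr|^2.
$$

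The algorithm has two phases. First, I apply the forward Fourier transform of Theorem \ref{Main theorem} to obtain $[f]_{B_n}$ from $[f]_{B_0}$; this costs at most $2(n-1)\binom{n}{k}$ operations. Second, I initialize accumulators $w_0, w_1, \ldots, w_s$ to zero and sweep once through the $\binom{n}{k}$ entries of $[f]_{B_n}$: for the entry $z_T$ indexed by a tableau $T$ of shape $(n-a,a)$, I perform the fused multiply-add $w_a \leftarrow w_a + z_T\,\overline{z_T}$. Since a single complex multiplication followed by an addition counts as one operation in the stated model, this sweep contributes exactly $\binom{n}{k}$ operations, and at its termination $w_a = \|f_a\|^2$ for every $a$.

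Adding the two phases gives the claimed bound $2(n-1)\binom{n}{k} + \binom{n}{k} = (2n-1)\binom{n}{k}$. There is no genuine obstacle; the argument reduces to Parseval's identity plus the cost estimate of Theorem \ref{Main theorem}. The point worth emphasizing is that, in contrast with Theorem \ref{Single isotypic component}, no inverse Fourier transform is required here, because the isotypic norms can be read off directly from the Fourier side thanks to the orthonormality of $B_n$; this is precisely what saves the factor that would otherwise bring the count up to roughly $4(n-1)\binom{n}{k}$.
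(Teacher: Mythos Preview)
Your proof is correct and follows essentially the same approach as the paper: apply the forward transform of Theorem~\ref{Main theorem} at cost $2(n-1)\binom{n}{k}$, then use orthonormality of $B_n$ to read off each $\|f_a\|^2$ as the sum of squared moduli of the entries indexed by tableaux of shape $(n-a,a)$, at cost $\binom{n}{k}$. Your write-up is in fact slightly more explicit than the paper's about why the second phase costs exactly $\binom{n}{k}$ and why no inverse transform is needed.
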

\begin{proof}
Observe that $\|f_a\|^2 = \|[f_a]_{B_n}\|^2 $. To obtain the  column vector $[f_a]_{B_n}$, we apply the Fourier transform to the function $f$, so that we obtain the column vector $[f]_{B_n}$ using $2( n-1) \binom{n}{k}$ operations. Then we select the entries of the vector  $[f]_{B_n}$ that correspond to Young tableaux of shape $(n-a,a)$, and we compute the sum of the squares of these entries. Doing this for all the values of $a$ can be accomplished using at most $\binom{n}{k}$ operations.
\end{proof}

\section{Further directions}
\label{Further directions}

Our results shows that once the matrices $[B_i]_{B_{i-1}}$  have been computed for $i=2,3,...,n$, the Fourier transform of any vector $f$ can be computed in $O(n \binom{n}{k})$ operations. It emerges the problem of computing efficiently these matrices, that is, finding an algorithm that on input $(n,k)$ it computes all the matrices $[B_i]_{B_{i-1}}$ using $O(n^c \binom{n}{k})$ operations for some constant $c$. We conjecture that such an algorithm exists.

\end{document}